\newtheorem{theorem}{Theorem}[section]
\newtheorem{lemma}[theorem]{Lemma}
\theoremstyle{definition}
\newtheorem*{theorem*}{Theorem}
\newtheorem*{proposition*}{Proposition}
\newtheorem*{lemma*}{Lemma}
\theoremstyle{remark}
\newtheorem*{remark}{Remark}
\numberwithin{equation}{section}
\newcommand{\QQ}{\mathbb Q}
\newcommand{\OO}{\mathcal O}
\newcommand{\NN}{\mathbb N}
\newcommand{\RR}{\mathbb R}
\newcommand*{\defeq}{\mathrel{\rlap{%
                     \raisebox{0.3ex}{$\m@th\cdot$}}%
                     \raisebox{-0.3ex}{$\m@th\cdot$}}%
                     =}
\renewcommand{\aa}{\mathfrak a}
\newcommand{\PP}{\mathbb P}
\newcommand{\II}{\mathcal I}
\author[Evan Chen]{Evan Chen}
\address{Department of Mathematics, Massachusetts Institute of Technology, \mbox{Cambridge, MA 02139}}
\email{\href{mailto:evanchen@mit.edu}{{\tt evanchen@mit.edu}}}
\author[Peter S. Park]{Peter S. Park}
\address{Department of Mathematics, Princeton University, Princeton, NJ 08544}
\email{\href{mailto:pspark@math.princeton.edu}{{\tt pspark@math.princeton.edu}}}
\author[Ashvin A. Swaminathan]{Ashvin A. Swaminathan}
\address{Department of Mathematics, Harvard College, \mbox{Cambridge, MA 02138}}
\email{\href{mailto:aaswaminathan@college.harvard.edu}{{\tt aaswaminathan@college.harvard.edu}}}
\begin{document}

\title[On Logarithmically Benford Sequences]{On Logarithmically Benford Sequences}
\date{\today}

\begin{abstract}
Let $\II \subset \NN$ be an infinite subset, and let $\{a_i\}_{i \in \II}$ be a sequence of nonzero real numbers indexed by $\II$ such that there exist positive constants $m, C_1$ for which $|a_i| \leq C_1 \cdot i^m$ for all $i \in \mathcal{I}$. Furthermore, let $c_i \in [-1,1]$ be defined by $c_i = \frac{a_i}{C_1 \cdot i^m}$ for each $i \in \II$, and suppose the $c_i$'s are equidistributed in $[-1,1]$ with respect to a continuous, symmetric probability measure $\mu$. In this paper, we show that if $\II \subset \NN$ is not too sparse, then the sequence $\{a_i\}_{i \in \II}$ fails to obey Benford's Law with respect to arithmetic density in any sufficiently large base, and in fact in any base when $\mu([0,t])$ is a strictly convex function of $t \in (0,1)$. Nonetheless, we also provide conditions on the density of $\II \subset \NN$ under which the sequence $\{a_i\}_{i \in \II}$ satisfies Benford's Law with respect to logarithmic density in every base.

As an application, we apply our general result to study Benford's Law-type behavior in the leading digits of Frobenius traces of newforms of positive, even weight. Our methods of proof build on the work of Jameson, Thorner, and Ye, who studied the particular case of newforms without complex multiplication.
\end{abstract}
\maketitle

\section{Introduction}
It was first noted in 1881 by astronomer Simon Newcomb that when numbers occur in the real world, their leading digits tend not to be uniformly distributed. Specifically, Newcomb observed while studying tables of logarithms that certain pages were more worn away than others, especially those pages corresponding to logarithms whose first digit is $1$~\cite{book}. In 1938, physicist Frank Benford corroborated this hypothesis in a considerably more general setting by testing it on an extensive data set including population sizes, physical constants, molecular weights, and even the surface areas of rivers~\cite{benford}. This bias toward certain initial digits, which is known as \emph{Benford's Law}, has since been discovered to hold for a number of distributions that arise in modern mathematics (see~\cite{book} for an informative exposition on the subject). But before we discuss specific examples of sequences that obey Benford's Law, we must pause to state the law in a precise and general manner.

\subsection{Definitions}

Let $\NN$ denote the set of positive integers, and let $\II \subset \NN$ be an infinite subset. Given an infinite sequence $\aa = \{a_i\}_{i \in \II}$ of nonzero real numbers indexed by $\II$ and a subset $A \subset \RR$, we can associate to the pair $(\aa, A)$ an \textit{arithmetic density} $d(\aa, A)$ \mbox{that is given by}
\begin{equation}\label{imwideawake}
	d(\aa,A)
	\defeq
	\lim_{x \to \infty}
	\frac%
	{ \#\{i \le x : i \in \II \text{ and } a_i \in A\} }
	{ \#\{i \le x : i \in \II\} },
\end{equation}
if the limit exists. The above definition serves as a means of quantifying the density of elements of the set $A$ in the sequence $\aa = \{a_i\}_{i \in \II}$. We say that $\aa$ is \emph{(arithmetically) Benford} in base $b$ if for any (nonzero) string of base-$b$ digits $S_b$, we have that
\[ d(\aa,b,S_b) \defeq d\big(\aa,\{ x \in \RR : \text{$|x|$ begins with $S_b$ in base $b$} \}\big) = \log_b ( 1 + S_b^{-1}), \]
where in computing the logarithm, we interpret $S_b$ as an integer. For example, if $\aa$ is Benford in base $10$, then the terms $a_i$ start with the digit $1$ about $d(\aa, 10, 1) \approx 30\%$ of the time and with the digit $9$ about $d(\aa, 10, 9) \approx 4.5\%$ of the time.
\begin{comment}
-- namely, for any set of nonzero numbers $S \subset \RR$ drawn from an arbitrary real-world setting, the leading digits of numbers in $S$ tend not to be uniformly distributed. For instance, if we take $S$ to be the set of stock prices at a given time, or the set of fundamental scientific constants, or even the lengths of all rivers on Earth, the digit $1$ appears considerably more often as the leading digit than do the digits $2, \dots, 9$.

The tendency for certain strings of digits to appear more often than others as the leading digits of numbers taken from a special set is known as \emph{Benford's Law} and can be stated in a precise manner as follows.
Given a sequence $a = \{a_i\}_{i \in \II}$ of real numbers (where $\II \subset \NN$) and a subset $A \subset \RR$, we can associate to the pair $(a, A)$ an \textit{arithmetic density} $d(a, A)$ that is given by
\begin{equation}\label{imwideawake}
	d(a,A)
	\defeq
	\lim_{x \to \infty}
	\frac%
	{ \#\{i \le x : i \in \II \text{ and } a_i \in A\} }
	{ \#\{i \le x : i \in \II\} }.
\end{equation}
\end{comment}

A number of sequences of arithmetic interest, such as the sequence of factorials and the partition function, are Benford in any base $b \geq 2$. Benford's law has also been proven for the distribution of values taken by $L$-functions~\cite{kontorovich}, data from dynamical systems (e.g. linearly-dominated systems and nonautonomous
dynamical systems)~\cite{berger}, and truncated progressions of the $3x+1$ problem~\cites{kontorovich, ksound}.

Nonetheless, there are many natural sequences of numbers that do not satisfy this strong property -- most notably the set of positive integers $\NN$, which fails to be Benford in any base $b \geq 2$. However, it is possible to show that such sequences still demonstrate a Benford-type behavior, as long as we consider a different, more inclusive notion of density.
Given a sequence $\aa = \{a_i\}_{i \in \II}$ and $A$ as before, and letting $\II_{\leq x} = \{i \in \II : i \leq x\}$, we can associate to the pair $(\aa, A)$ a \textit{logarithmic density} $\delta(\aa, A)$ defined as
\begin{equation}\label{wreckingball}
	\delta(\aa, A)
	\defeq
	\lim_{x \to \infty}
	\frac%
	{ \displaystyle\sum_{i \in \II_{\le x},a_i \in A} \tfrac{1}{i}}
	{ \displaystyle\sum_{i \in \II_{\le x}} \tfrac{1}{i}},
\end{equation}
if the limit exists. We then say that the sequence $\aa$ is \emph{logarithmically Benford} in base $b$ if for any (nonzero) string of base-$b$ digits $S_b$, we have that
\[ \delta(\aa,b,S_b) \defeq \delta\big(\aa,\{ x\in \RR : \text{$|x|$ begins with $S_b$ in base $b$} \}\big) = \log_b ( 1 + S_b^{-1}).\]
It is known (e.g., see~\cite{stronger}) that if the arithmetic density $d(\aa,A)$ exists, then the logarithmic density $\delta(\aa, A)$ also exists and equals $d(\aa,A)$.
However, the converse of this statement is false; as stated in~\cite{strongerex}, both the sequence of natural numbers and that of prime numbers are logarithmically Benford with respect to any string $S_b$ in any base $b$. 
Therefore, the condition of being logarithmically Benford
is strictly weaker than that of being arithmetically Benford.
\begin{remark}
There are other types density, such as Dirichlet density, that are strictly weaker than arithmetic density (in fact, Dirichlet density is strictly weaker than logarithmic density). In this paper, we restrict our consideration to arithmetic and logarithmic densities, as these are the most commonly studied. It may however be interesting to determine whether there are sequences of mathematical importance that satisfy Benford's Law with respect to other types of density. For example, it is known that the primes are Benford with respect to logarithmic (and hence Dirichlet) density. See~\cite{pompoms} for a more description of generalized asymptotic densities, and see~\cite{course} for a discussion of Dirichlet density in particular.
\end{remark}

\subsection{Statement of Results}\label{sob}

One interesting occurrence of the logarithmic Benford property lies in the study of Fourier coefficients of certain modular forms, namely newforms (i.e.,~holomorphic cuspidal normalized Hecke eigenforms; see~\cite{ken} for a standard reference). In~\cite{ye}, Jameson, Thorner, and Ye employed the Sato-Tate conjecture to show that for a newform $f$ of even weight \emph{without} complex multiplication, the sequence $\{a_f(p) : p \text{ prime}\}$ of Frobenius traces of $f$ is not arithmetically Benford in any base $b \geq 2$, but is logarithmically Benford in every base. As we show in this paper, the method of proof employed in~\cite{ye} can be modified to yield more general results about when sequences are logarithmically Benford but not arithmetically Benford. Moreover, we prove as a corollary that the theorems of Jameson, Thorner, and Ye~hold for such newforms with complex multiplication as well.

We shall consider sequences $\aa = \{a_i\}_{\in \II}$ whose growth is bounded by a power function of the index $i$; specifically, suppose there exist constants $m, C_1 > 0$ such that $|a_i| \leq C_1 \cdot i^m$ for all $i \in \mathcal{I}$. As noted in~\cite{rolex}, proving that a sequence is Benford usually requires an equidistribution result of some sort as input, so we impose the following assumption on our sequence $\aa$: taking $c_i \in [-1,1]$ to be defined by $c_i = \frac{a_i}{C_1 \cdot i^m}$ for each $i \in \II$, suppose the $c_i$'s are equidistributed in $[-1,1]$ with respect to a continuous, symmetric probability measure $\mu$. Note that this assumption implies that for any $[A,B] \subset [-1,1]$ we have
\begin{equation}\label{sorreee}
\frac{\#\{i \leq x : i \in \II \text{ and } c_i \in [A,B]\}}{\#\{i \leq x : i \in \II\}} = (1 + o(1)) \cdot \mu([A,B])  ,
\end{equation}
from which one readily deduces that
\begin{equation}
\sum_{\substack{i \in \II_{\leq x} \\ c_i \in [A,B]}} \frac{1}{i} = (1 + o(1)) \cdot \mu([A,B]) \cdot \sum_{\substack{i \in \II_{\leq x}}} \frac{1}{i}.\label{imfeelin22}
\end{equation}

We wish to determine whether the sequence $\aa$ is arithmetically, or at least logarithmically, Benford in any base $b \geq 2$. To this end, we prove two main theorems, the first of which is stated as follows.

\begin{theorem}
Retain the above setting, and suppose for every $c > 1$ that $\II \cap [x, cx] \neq \varnothing$ for all sufficiently large $x$. Then $\aa$ is not arithmetically Benford in any sufficiently large base. If in addition $\mu([0,t])$ is a strictly convex function of $t \in (0,1)$, then $\aa$ is not arithmetically Benford in any base $b \ge 2$.
	\label{thm:arithmetic}
\end{theorem}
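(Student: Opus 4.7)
\emph{Plan.} My plan is to compute the arithmetic density $d(\aa, b, 1)$ explicitly as an integral involving $\mu$ and the distribution of the logarithms of the indices $i \in \II$, and then to show this integral cannot equal the Benford prediction $\log_b 2$, either for sufficiently large $b$, or (under strict convexity) for all $b \ge 2$.

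For each $i \in \II$, write $\log_b(C_1 i^m) = N_i + \eta_i$ with $N_i \in \ZZ$ and $\eta_i \in [0,1)$. A direct computation shows $|a_i|$ has leading digit $1$ in base $b$ if and only if $|c_i| \in T_{\eta_i}$, where $T_\eta \defeq \bigcup_{k \geq 0} [b^{-k-\eta}, \min(2 b^{-k-\eta}, 1))$. Setting $F(t) \defeq \mu([0, t])$ and $G(\eta) \defeq \sum_{k \geq 0}[F(\min(2 b^{-k-\eta}, 1)) - F(b^{-k-\eta})]$, the symmetry of $\mu$ gives $\mu(\{c \in [-1,1]: |c| \in T_\eta\}) = 2G(\eta)$. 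I would then combine equation~\eqref{sorreee} with the density condition $\II \cap [x, cx] \ne \varnothing$ to partition $\II_{\le x}$ into fine bins $\eta_i \in [j/N, (j+1)/N)$ and apply equidistribution of the $c_i$'s within each bin, obtaining in the limit $d(\aa, b, 1) = 2 \int_0^1 G(\eta) \, d\nu(\eta)$ for some asymptotic distribution $\nu$ of the $\eta_i$'s on $[0, 1)$.

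For the first assertion, I would estimate this integral as $b \to \infty$. Continuity of $\mu$ (hence continuity of $F$ at $0$ with $F(0) = 0$) gives $G(\eta) \to 0$ pointwise for each $\eta \in (0, 1)$; the peak of $G$ near $\eta = \log_b 2$ migrates toward $\eta = 0$, and since the density condition on $\II$ precludes $\nu$ from having an atom at $0$, the contribution from a shrinking neighborhood of $0$ vanishes as well. This yields $d(\aa, b, 1) \to 0$ at a rate strictly faster than $\log_b 2 = (\log 2)/\log b$, so the two cannot agree for $b$ sufficiently large.

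For the second assertion under strict convexity, the hypothesis that $F$ is strictly convex on $(0,1)$ implies $t \mapsto F(t)/t$ is strictly increasing, which should translate via a Jensen-type inequality applied to the sum defining $G$ into a strict inequality $d(\aa, b, 1) \ne \log_b 2$ for each $b \ge 3$. For the edge case $b = 2$, where ``leading digit $1$'' trivially matches the Benford prediction $\log_2 2 = 1$, I would apply the same analysis to the length-two string $S_2 = 10$ and use strict convexity to rule out equality with $\log_2(3/2)$. The main obstacle I foresee is two-fold: first, justifying the derivation of the integral formula for $d(\aa, b, 1)$ requires passing from the global equidistribution~\eqref{sorreee} to a local equidistribution within each $\eta$-bin, which depends on the density condition on $\II$; and second, the strictly convex case must yield a clean inequality uniform in $b \ge 2$, which will likely require separating the analysis for $b = 2$ from larger bases.
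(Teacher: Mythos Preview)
Your approach has a real gap that the hypotheses cannot bridge. The integral formula $d(\aa,b,1) = 2\int_0^1 G(\eta)\,d\nu(\eta)$ requires two things you do not have: (i) a limiting distribution $\nu$ for the fractional parts $\eta_i = \{\log_b(C_1 i^m)\}$, and (ii) equidistribution of the $c_i$'s \emph{within each $\eta$-bin}. The density condition $\II\cap[x,cx]\ne\varnothing$ only says that consecutive elements of $\II$ satisfy $i_{n+1}/i_n\to 1$; it gives no control whatsoever over how the $\eta_i$'s distribute, and the global equidistribution~\eqref{sorreee} says nothing about the joint behaviour of $(\eta_i,c_i)$. You flag both points as ``obstacles,'' but they are fatal rather than technical: for a general $\II$ and a general coupling between indices and the $c_i$'s, neither (i) nor (ii) holds, so the integral formula is simply not available. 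Your large-$b$ argument has a further issue: pointwise $G(\eta)\to 0$ on $(0,1)$ does not by itself beat the target $\log_b 2\sim(\log b)^{-1}$, and the measure $\nu$ you integrate against itself varies with $b$.

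The paper proceeds quite differently. It never tries to compute the density; instead it assumes for contradiction that $d(\aa,b,1_b)$ exists and then evaluates that same limit along two explicit families of index-intervals, $I_n^- = \{i\in\II: \tfrac{40}{23}b^n\le C_1 i^m < 2b^n\}$ and $I_n^+ = \{i\in\II: \tfrac52 b^n < C_1 i^m \le \tfrac83 b^n\}$, on each of which $\eta_i$ is pinned near a fixed value. On $I_n^-$ one obtains the lower bound $2\sum_{j\ge 0}\mu\bigl([\tfrac{23}{40}b^{-j},\,b^{-j}]\bigr)$ for the density; on $I_n^+$ one obtains the upper bound $2\sum_{j\ge 0}\mu\bigl([\tfrac38 b^{-j},\,\tfrac45 b^{-j}]\bigr)$. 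Strict convexity of $t\mapsto\mu([0,t])$ forces $\mu\bigl([\tfrac{23}{40}x,\,x]\bigr)>\mu\bigl([\tfrac38 x,\,\tfrac45 x]\bigr)$ for every $x\in(0,1)$, so these bounds are incompatible and the limit cannot exist. For $b=2$ the paper does, as you anticipated, pass to the string $10_2$ and repeats the argument with different constants. The conceptual difference is that the paper exploits the \emph{non}-existence of the limit rather than trying to pin down its value, and it needs only two carefully chosen $\eta$-windows rather than an integral over all of them; this is what allows it to avoid ever invoking a limiting measure $\nu$.
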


On the other hand, our second theorem indicates conditions under which the sequence $\aa$ is logarithmically Benford in every base $b \geq 2$.

\begin{theorem}
Retain the above setting, and suppose $\II$ is such that $\sum_{i \in \II_{\leq x}} \frac{1}{i} = (1 + o(1)) \cdot C_2 \cdot g(x)$, where $C_2 > 0$ is a constant and where we may take $g(x) = \log x$ or $g(x) = \log \log x$. Then $\aa$ is logarithmically Benford in every base $b \ge 2$.
	\label{thm:logarithmic}
\end{theorem}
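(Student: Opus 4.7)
The approach is to reformulate Theorem~\ref{thm:logarithmic} as an equidistribution statement modulo $1$. Note that $|a_i|$ begins with the digit string $S_b$ in base $b$ if and only if the fractional part $\{\log_b|a_i|\}$ lies in a specific subinterval $I_{S_b}\subset[0,1)$ of length $\log_b(1+S_b^{-1})$. Using the decomposition $\log_b|a_i|=m\log_b i+\log_b C_1+\log_b|c_i|$, this leading-digit behavior is governed by the joint behavior of $\{m\log_b i\}$ and $\log_b|c_i|$. Hence it suffices to prove that $(c_i,\{m\log_b i\})$ is equidistributed in $[-1,1]\times[0,1)$ with respect to $\mu\times\lambda$ (where $\lambda$ denotes Lebesgue measure) under the logarithmic density on $\II$; Fubini's theorem then yields the required density $\log_b(1+S_b^{-1})$ upon integrating the indicator of $I_{S_b}$ against the shift $(c,u)\mapsto\{u+\log_b(C_1|c|)\}$.

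By Weyl's criterion, this joint equidistribution reduces to showing that, for every nonzero integer $k$ and every continuous $F:[-1,1]\to\CC$,
\begin{equation}
	\frac{1}{S(x)}\sum_{i\in\II_{\leq x}}\frac{F(c_i)\cdot e^{2\pi\sqrt{-1}\,km\log_b i}}{i}\xrightarrow{x\to\infty}0, \label{weyl_twisted}
\end{equation}
where $S(x)=\sum_{i\in\II_{\leq x}}1/i\sim C_2 g(x)$. Since the $c$-dependent oscillatory factor $c\mapsto e^{2\pi\sqrt{-1}\,k\log_b(C_1|c|)}$ is bounded but discontinuous at $c=0$, one first truncates to $|c_i|\geq\eps$, which by~\eqref{imfeelin22} discards a subset of logarithmic density at most $\mu([-\eps,\eps])+o(1)$, tending to $0$ as $\eps\to 0$ by continuity of $\mu$; on $\{|c|\geq\eps\}$ the factor is continuous and may be approximated uniformly by a continuous $F:[-1,1]\to\CC$ as Weyl's criterion requires.

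The cancellation~\eqref{weyl_twisted} is the technical heart of the argument. Setting $H_F(t)\defeq\sum_{i\in\II_{\leq t}}F(c_i)/i$, hypothesis~\eqref{imfeelin22} combined with a step-function approximation of $F$ yields $H_F(t)=(1+o(1))\cdot C_2 g(t)\int F\,d\mu$. Writing $\sigma\defeq 2\pi\sqrt{-1}\,km/\ln b$ so that $e^{2\pi\sqrt{-1}\,km\log_b i}=i^{\sigma}$, Abel summation gives
\[
	\sum_{i\in\II_{\leq x}}\frac{F(c_i)\,i^{\sigma}}{i}=H_F(x)\,x^{\sigma}-\sigma\int_1^x H_F(t)\,t^{\sigma-1}\,dt,
\]
in which the main-term contribution from $C_2 g(t)\int F\,d\mu$ produces $O(1)$ after integration by parts, valid for both $g(t)=\log t$ and $g(t)=\log\log t$ (the latter case using that $\sum_{p\leq x}p^{\sigma-1}=O(1)$ follows from analytic properties of $\log\zeta$ on $\mathrm{Re}(s)=1$, and analogous control for general $\II$ follows from the assumed growth of $S(x)$). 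The principal obstacle is the error term: a naive triangle-inequality bound on $\sigma\int_1^x[H_F(t)-C_2 g(t)\int F\,d\mu]\,t^{\sigma-1}dt$ only produces $O(g(x)^2)$, so one must exploit the oscillation of the purely imaginary exponent in $t^{\sigma-1}$---either via a further integration by parts that transfers the derivative onto the slowly-varying equidistribution error, or via a van der Corput-type estimate tuned to the density of $\II$---to obtain the needed $o(g(x))$ decay. Once~\eqref{weyl_twisted} is established, the joint equidistribution follows and Theorem~\ref{thm:logarithmic} is proved.
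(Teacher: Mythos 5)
Your reduction (leading digits $\Leftrightarrow$ equidistribution of $\{\log_b|a_i|\}$ mod $1$, then Weyl's criterion for the pair $(c_i,\{m\log_b i\})$, with the truncation $|c_i|\ge\eps$ to handle the discontinuity at $c=0$) is a legitimate framing, but the proof stops exactly where the theorem's analytic content begins: the vanishing of the twisted logarithmic Weyl sums is never established. Your final paragraph concedes this, offering only "a further integration by parts that transfers the derivative onto the slowly-varying equidistribution error" or "a van der Corput-type estimate" as possible strategies, and neither can work as described. The only information the hypotheses give about the error $E(t)=H_F(t)-C_2\,g(t)\int F\,d\mu$ is that it is $o(g(t))$ --- with no rate, no smoothness, and no monotonicity --- so there is nothing for an integration by parts or a van der Corput argument to grip; the naive bound you correctly reject is in fact all that the stated assumptions yield along this route.

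Worse, the missing estimate is not a consequence of the stated hypotheses at all. Taking $F\equiv 1$, your required cancellation says $\sum_{i\in\II_{\le x}} i^{\sigma-1}=o(g(x))$, i.e. that $\{m\log_b i\}$ is equidistributed mod $1$ in logarithmic density along $\II$. But the set $\II=\bigcup_{t\ge 0}\bigl(\NN\cap[b^t,b^{t+1/2})\bigr)$ satisfies $\sum_{i\in\II_{\le x}}\tfrac1i=(1+o(1))\cdot\tfrac12\log x$, while $\{\log_b i\}$ never leaves $[0,1/2)$, so the Weyl sums with resonant frequencies do not vanish. Hence your assertion that "analogous control for general $\II$ follows from the assumed growth of $S(x)$" is false: the cancellation for primes comes from Mertens-type estimates (the $\log\zeta$ input you mention), and for $\II=\NN$ from the explicit asymptotics of $\sum_{n\le x}n^{\sigma-1}$, i.e. from regularity of $\II$ inside multiplicative windows, not from the size of $S(x)$ alone. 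This is precisely where the paper's argument does its work: it never forms Weyl sums, but instead bins $|c_i|$ into intervals $(j/r^2,(j+1)/r^2]$, notes that the digit condition then confines $i$ to explicit multiplicative windows indexed by the exponent $t$, and sums $1/i$ over those windows using \eqref{imfeelin22} and the asymptotics of $g(x)=\log x$ or $\log\log x$, with the small-$|c_i|$ range controlled separately by $2\mu([0,r^{-1}])$. So while your joint-equidistribution formulation is an attractive repackaging, as written the step carrying all the content is absent, and it cannot be supplied at the level of generality at which you attempt it.
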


\begin{remark}
We note that the assumptions made in stating the above theorems are reasonable. Indeed, as shown in~\cite{rolex}, certain sequences with more than polynomial growth are known to be arithmetically Benford. Furthermore, the proof of Theorem~\ref{thm:logarithmic} depends heavily on the particular properties of the functions $g(x) = \log x$ and $g(x) = \log \log x$.
\end{remark}

The rest of this paper is organized as follows. Section~\ref{ari} presents the proof of Theorem~\ref{thm:arithmetic}, and Section~\ref{log} discusses the proof of Theorem~\ref{thm:logarithmic}. Section~\ref{sadist} concludes the paper with an application of Theorems~\ref{thm:arithmetic} and~\ref{thm:logarithmic} to studying Benford's Law-type behavior in the sequence of Frobenius traces of a newform. 

\section{Proof of Theorem~\ref{thm:arithmetic}}\label{ari}
To begin with, we observe that the proof of Theorem 1 in~\cite{ye} works, \emph{mutatis mutandis}, to prove that $\aa$ is not arithmetically Benford in any sufficiently large base. Indeed, the only conditions that the proof of~\cite{ye} ever uses are that $\II$ have nontrivial intersection with every sufficiently large subinterval of $\NN$ and that $\mu$ be symmetric and continuous. However, as we will now show, the case where $\mu([0,t])$ is strictly convex has a much cleaner proof.

Suppose $b \ge 3$,
and let $1_b$ be the string of digits whose only character is the digit $1$,
interpreted in the base $b$.
We will prove that when $S_b = 1_b$,
the limit (\ref{imwideawake}) defining the arithmetic density
\[ d\big(\aa, b, 1_b\big) \]
does not exist, which is enough to imply the theorem in this case. Assume for the sake of contradiction that this limit exists,
and for every $n \in \NN$, put
\[ I_n^- \defeq
	\left\{ i \in \II : \frac{40}{23} b^n \le C_1 \cdot i^m < 2 b^n \right\}.  \]
Observe that $I_n^- \neq \varnothing$ for sufficiently large $n$ because of our assumption on the sparseness of $\II \subset \NN$. Suppose that for some $i \in I_n^-$ there exists nonnegative integer $j$ such that
\[ \frac{23}{40} b^{-j} \le \left|c_i\right| \le b^{-j}. \]
Then, we have by the definition of $c_i$ that
\[ b^{n-j} \leq |a_i| < 2 b^{n-j}, \]
from which we readily deduce that $|a_i|$ begins with the digit $1$ in base $b$.
Now, by appealing to Theorem~\ref{thm:satotate}, we have the following lower bound on the desired density:
\begin{eqnarray}
	d\big(\aa, b, 1_b\big)
	& = &
	\lim_{n\to\infty}
	\frac{\#\{i \in I_n^- : a_i \text{ begins with $1_b$}\}}{\#I_n^-} \nonumber \\
	& \ge & \sum_{j = 0}^\infty \lim_{n\to\infty}
	\frac{\#\{i \in I_n^- : \tfrac{23}{40} b^{-j} \le \left|c_i\right| \le b^{-j}\}}{\#I_n^-} \nonumber \\
    & = &
	2\sum_{j = 0}^\infty \mu\left(\left[\frac{23}{40}b^{-j}, b^{-j}\right]\right). \label{eq:Sad}
\end{eqnarray}
We wish to derive an upper bound on $d\big(\aa, b, 1_b\big)$ in a similar manner.
Consider indices $i$ such that $b^n \le |a_i| < 2b^n$,
and define the interval
\[ I_n^+ \defeq \left\{ i \in \II :
\frac52 b^n < C_1 \cdot i^m \le \frac83 b^n \right\}. \]
If $|a_i|$ begins with the digit $1$, and $i \in I_n^+$,
we deduce that $\frac38 b^{-j} \le |c_i| \le \frac45b^{-j}$ for some integer $j$;
but as $b \ge 3$ and $|c_i| \le 1$ this only makes sense for $j \ge 0$.
Thus we obtain an upper bound
\begin{eqnarray}
	d\big(\aa, b, 1_b\big)
	& = &
	\lim_{n\to\infty}
	\frac{\#\{i \in I_n^+ : a_i \text{ begins with $1_b$}\}}{\#I_n^+} \nonumber \\
	& \le &
	2\sum_{j = 0}^\infty\mu\left(\left[ \frac38 b^{-j}, \frac45 b^{-j}\right]\right) \label{eq:Swag}
\end{eqnarray}
Combining \eqref{eq:Sad} and \eqref{eq:Swag}, we get
\[
\sum_{j = 0}^\infty \mu\left(\left[\frac{23}{40}b^{-j}, b^{-j}\right]\right) \le \sum_{j = 0}^\infty\mu\left(\left[ \frac38 b^{-j}, \frac45 b^{-j}\right]\right),
\]
but this contradicts our convexity assumption; we have for all $x \in (0,1)$ that 
\[
\mu\left(\left[\frac{23}{40}x, x \right]\right) > \mu\left(\left[ \frac38 x, \frac45 x\right]\right).
\]

For $b = 2$, a similar argument can be employed,
but we cannot simply take the string $1_b$ since
$d(\mathfrak a, 1, 1_b)$ trivially equals $1 = \log_2(1+1^{-1})$.
Instead, we repeat the above argument using the string $10_b$; the lower bound is obtained by counting $i \in \II$ with
\[ \frac{11}{15} \cdot 4^{-j} \le |c_i| \le 4^{-j} \]
across the interval $\frac{30}{11} \cdot 4^n \le C_1 \cdot i^m < 3 \cdot 4^n$,
whilst the upper bound is obtained by counting $i \in \II$ with
\[ \frac25 \cdot 4^{-j} \le |c_i| \le \frac23 \cdot 4^{-j} \]
across the interval $\frac{9}{2} \cdot 4^n \le C_1 \cdot i^m < 5 \cdot 4^n$.
We can then obtain a similar contradiction by using the convexity assumption on $\mu$ to see that
$ \mu\left( \left[ \frac{11}{15}x, x \right] \right) >
\mu\left( \left[ \frac25x, \frac23x \right] \right)$
for all $x \in (0,1)$.

\section{Proof of Theorem~\ref{thm:logarithmic}}\label{log}

Recall the assumptions of Theorem~\ref{thm:logarithmic}: we take our measure $\mu$ to be symmetric and our index set $\II$ to satisfy $\sum_{i \in \II_{\leq x}} \frac{1}{i} = (1+o(1)) \cdot C_2 \cdot g(x)$, where $C_2 > 0$ is a constant and where $g(x) = \log x$ or $g(x) = \log \log x$. To show that $\aa$ is logarithmically Benford in any base $b \geq 2$, it suffices to show that
\begin{equation}\label{eq:yomama}
\sum_{\substack{i \in {\II}_{\leq x} \\ a_i \in  A(b,S_b)}} \frac{1}{i} = (1+o(1)) \cdot \log_b(1+S_b^{-1}) \cdot (C_2 \cdot g(x)),
\end{equation}
where $A(b,S_b) = \{ x \in \RR : \text{$|x|$ begins with $S_b$ in base $b$} \}$. Let $r \in \NN$, to be specified later, and split the sum on the left-hand-side of~\eqref{eq:yomama} into two pieces, according as $|c_i| \leq \tfrac{1}{r}$ or $|c_i| > \tfrac{1}{r}$:
\begin{equation}\label{dubbleD}
\sum_{\substack{i \in {\II}_{\leq x} \\ a_i \in  A(b,S_b)}} \frac{1}{i} = \sum_{\substack{i \in {\II}_{\leq x} \\ a_i \in  A(b,S_b) \\ |c_i| \leq 1/r}} \frac{1}{i} + \sum_{\substack{i \in {\II}_{\leq x} \\ a_i \in  A(b,S_b) \\ |c_i| > 1/r}} \frac{1}{i}.
\end{equation}
It is fairly straightforward to bound the first sum on the right-hand-side of~\eqref{dubbleD}:
\begin{equation}\label{raiders}
\sum_{\substack{i \in {\II}_{\leq x} \\ a_i \in  A(b,S_b) \\ |c_i| \leq 1/r}} \frac{1}{i}  \leq \sum_{\substack{i \in {\II}_{\leq x} \\ |c_i| \leq 1/r}} \frac{1}{i} = (1+o(1)) \cdot 2\mu([0,r^{-1}]) \cdot (C_2 \cdot g(x)).
\end{equation}
Estimating the second sum on the right-hand-side of~\eqref{dubbleD} is certainly more involved; the following Lemma~\ref{lem:shakeitoff} shows how this can be done by making explicit use of our assumption that $g(x) = \log x$ or $g(x) = \log \log x$.
\begin{lemma}\label{lem:shakeitoff}
For an initial string $S_b$ of digits in a given base $b \geq 2$ and an integer $r \geq 2$, we have that as $x \to \infty$,
\begin{eqnarray*}
	& & (1 + o(1)) \cdot \left(\log_b(1+S_b^{-1}) - \log_b(1+r^{-1})\right) \cdot (C_2 \cdot g(x)) \\
	& \leq & \sum_{\substack{i \in {\II}_{\leq x} \\ a_i \in  A(b,S_b) \\ |c_i| > 1/r}} \frac{1}{i} \\
	& \leq & (1 + o(1)) \cdot \left(\log_b(1+S_b^{-1}) + \log_b(1+r^{-1})\right) \cdot (C_2 \cdot g(x)) + K \cdot g(r),
\end{eqnarray*}
for some constant $K > 0$, possibly depending on the fixed parameters $b$, $S_b$, $m$, and $C_1$.
\end{lemma}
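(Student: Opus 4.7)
The plan is to express the leading-digit condition $a_i \in A(b,S_b)$ as a condition on $|c_i|$ belonging to an explicit union of intervals whose endpoints depend on $i$, and then to apply the equidistribution assumption~(\ref{imfeelin22}) via a dyadic decomposition of $\II_{\leq x}$ to handle this $i$-dependence.

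First I would observe that, for each $i$ with $|c_i| > 1/r$, the condition $a_i = C_1 i^m c_i \in A(b,S_b)$ is equivalent to $|c_i|$ belonging to $\bigcup_{n \in \ZZ} [S_b b^n/(C_1 i^m), (S_b+1) b^n/(C_1 i^m)) \cap (1/r, 1]$. This intersection is a disjoint union of $\lfloor \log_b r \rfloor + O(1)$ intervals, one per integer $n$ in a window of width $\log_b r + O(1)$, each of logarithmic length exactly $L := \log_b(1+S_b^{-1})$ (apart from at most $O(1)$ partial intervals at the endpoints of the window).

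Since these good intervals depend on $i$, a direct invocation of~(\ref{imfeelin22}) is not possible. I would therefore fix a small parameter $\eta > 0$ and partition $\II$ into sub-classes $\mathcal{J}_j := \{i \in \II : C_1 i^m \in [b^{j\eta}, b^{(j+1)\eta})\}$ for $j \in \ZZ_{\geq 0}$. On each $\mathcal{J}_j$, the varying good intervals for $|c_i|$ are sandwiched between a fixed outer and a fixed inner interval whose endpoints differ by a multiplicative factor at most $b^\eta$. Applying~(\ref{imfeelin22}) to the fixed sandwiching intervals yields upper and lower sandwich bounds on the contribution of $\mathcal{J}_j \cap \II_{\leq x}$. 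Summing over all $j$ and all relevant $n$, and letting $\eta \to 0$ along a suitable sequence, the main term collapses to $L \cdot C_2 g(x)$, by virtue of the uniform logarithmic length $L$ of the good intervals.

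The remaining errors come from three sources that I would track separately: (a) the multiplicative discrepancy $b^\eta$ between outer and inner sandwiching intervals, whose total contribution vanishes as $\eta \to 0$; (b) the $O(1)$ partial boundary decades near $|c_i| \approx 1/r$, where the good interval only partly intersects $(1/r, 1]$, contributing at most $\log_b(1+r^{-1}) \cdot C_2 g(x)$ by a direct computation on the logarithmic scale; and (c) the contribution of bounded-size indices $i$ for which the $o(1)$-error of~(\ref{imfeelin22}) has not yet become small, which is absorbed into the additive $K \cdot g(r)$.

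The main obstacle I anticipate is managing the $i$-dependence of the good $c_i$-intervals. The dyadic partition argument is the key device for handling this, but requires a delicate choice of $\eta$: it must be small enough that the sandwiching intervals tightly approximate the true intervals, yet the $o(1)$-error of~(\ref{imfeelin22}), when summed over all $O((\log x)/\eta)$ sub-classes, must remain negligible relative to $C_2 g(x)$. A secondary technical challenge is extracting the sharp rate $\log_b(1+r^{-1})$ (rather than a weaker bound depending on $\mu$ near $0$) for the boundary error, which exploits the equal logarithmic length $L$ of the Benford intervals.
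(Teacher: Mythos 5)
Your reduction of the leading-digit condition to a union of $i$-dependent intervals for $|c_i|$ is the right starting point, but your central device --- partitioning $\II$ into the short multiplicative windows $\mathcal{J}_j=\{i\in\II: C_1 i^m\in[b^{j\eta},b^{(j+1)\eta})\}$ and applying \eqref{imfeelin22} separately inside each window --- is not available under the stated hypotheses, and this is a genuine gap. The assumption \eqref{sorreee}/\eqref{imfeelin22} is a statement about the full initial segments $\II_{\le x}$ only. For fixed $\eta$ and fixed $j$ the set $\mathcal{J}_j$ is a fixed finite set, so the hypothesis says nothing about how the $c_i$ with $i\in\mathcal{J}_j$ are distributed (they could all sit in one tiny subinterval); and trying to manufacture a window statement by differencing two instances of \eqref{imfeelin22} produces an error of size $o(g(x))$, which dwarfs the mass of a single window ($O(\eta)$ when $g=\log$, and $o(1)$ when $g=\log\log$). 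Since the number of windows below $x$ grows like $\log x/\eta$, no choice of $\eta$ --- even one depending on $x$ --- tames the accumulated error, because the hypothesis carries no rate. You flag exactly this as the ``delicate choice of $\eta$,'' but under the paper's assumptions there is no such choice: what your argument actually requires is equidistribution in short multiplicative windows with errors summable over windows, which is strictly stronger than what the paper assumes (it is essentially the effective input available in the Jameson--Thorner--Ye setting that this paper deliberately does without).

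The paper's proof dualizes your decomposition precisely to avoid this: it partitions the $c$-range $(1/r,1]$ into the \emph{fixed, finite} grid $(j/r^2,(j+1)/r^2]$, $r\le j\le r^2-1$, so the equidistribution input is invoked only boundedly many times (once per $j$, with $r$ fixed before $x\to\infty$) and always with a fixed test interval. With $|c_i|$ pinned to such an interval, the decade condition $S_b b^t\le|a_i|<(S_b+1)b^t$ confines $i$ to an explicit range of bounded multiplicative length $\big((1+S_b^{-1})(1+j^{-1})\big)^{1/m}$, and the harmonic sums over these geometric ranges are evaluated directly using the special form $g(x)=\log x$ or $\log\log x$; the number of contributing decades is about $\log_b\frac{C_1 j x^m}{(S_b+1)r^2}$. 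Note also two structural differences from your error accounting: the slack $\log_b(1+r^{-1})$ arises from the inequality $\log(1+j^{-1})\le\log(1+r^{-1})$ for $j\ge r$ (imprecision in pinning $|c_i|$), not from partial boundary decades near $|c_i|\approx 1/r$ as in your item (b); and the additive term $K\cdot g(r)$ comes from the decades $t<0$, which together with $|c_i|>1/r$ force $i\le\big(\tfrac{(S_b+1)r}{C_1 b}\big)^{1/m}$, not from indices where the $o(1)$ of \eqref{imfeelin22} has ``not yet kicked in'' as in your item (c). As written, your proposal does not prove the lemma from \eqref{imfeelin22} alone; it would need an added short-window equidistribution hypothesis, whereas the paper's fixed-grid-in-$c$ decomposition is the step that makes the global hypothesis suffice.
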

\begin{proof}
In what follows we shall give a proof of the upper bound; we omit the proof of the lower bound because it is analogous. Upon observing that we may write $$A(b,S_b) = \bigcup_{t = -\infty}^\infty \{x \in \RR :  |x| \in [S_b \cdot b^t , (S_b + 1) \cdot b^t)\},$$
we can split the desired sum as follows:
\begin{equation}\label{taytay}
\sum_{\substack{i \in {\II}_{\leq x} \\ a_i \in  A(b,S_b) \\ |c_i| > 1/r}} \frac{1}{i} = \sum_{t = -\infty}^\infty \sum_{\substack{i \in {\II}_{\leq x} \\ S_b \cdot b^t \leq |a_i| < (S_b + 1) \cdot b^t \\ |c_i| > 1/r}} \frac{1}{i}.
\end{equation}
When $t < 0$ and $|c_i| > 1/r$, the condition that $S_b \cdot b^t \leq |a_i| = C_1 \cdot i^m \cdot |c_i|\leq  (S_b + 1) \cdot b^t$ implies that $i \leq \left(\frac{S_b+1}{C_1 \cdot b}r\right)^{\frac{1}{m}}$, so the terms with $t < 0$ in~\eqref{taytay} can be bounded as follows:
$$\sum_{t = -\infty}^{-1} \sum_{\substack{i \in {\II}_{\leq x} \\ S_b \cdot b^t \leq |a_i| < (S_b + 1) \cdot b^t \\ |c_i| > 1/r}} \frac{1}{i} \leq \sum_{\substack{i \in \II \\ i \leq \left(C_2 \cdot \frac{S_b+1}{C_1 \cdot b}r\right)^{\frac{1}{m}}}} \frac{1}{i} \leq K \cdot g(r)$$
for some constant $K > 0$, possibly depending on the fixed parameters $m$, $b$, $S_b$, and $C_1$. We may now restrict our attention to the terms with $t \geq 0$ in~\eqref{taytay}; to bound these terms, we split the sum even further:
\begin{align*}
\sum_{t = 0}^\infty \sum_{\substack{i \in {\II}_{\leq x} \\ S_b \cdot b^t \leq |a_i| < (S_b + 1) \cdot b^t \\ |c_i| > 1/r}} \frac{1}{i} = \sum_{j = r}^{r^2 - 1}\sum_{t = 0}^\infty \sum_{\substack{i \in {\II}_{\leq x} \\ S_b \cdot b^t \leq |a_i| < (S_b + 1) \cdot b^t \\ j/r^2 < |c_i| \leq (j+1)/r^2}} \frac{1}{i},
\end{align*}
where we can afford to be loose about the order of summation because the contribution is $0$ for all but finitely many values of $t$. When $\frac{j}{r^2} \leq |c_i| \leq \frac{j+1}{r^2}$, the condition $S_b \cdot b^t \leq |a_i| = C_1 \cdot i^m \cdot |c_i| \leq (S_b + 1) \cdot b^t$ implies that $\left(\tfrac{S_b \cdot b^t}{C_1 \cdot (j+1)}r^2\right)^{\frac{1}{m}} \leq i \leq \left(\tfrac{(S_b+1) \cdot b^t}{C_1 \cdot j}r^2\right)^{\frac{1}{m}}$; this observation along with~\eqref{imfeelin22} and the condition that $\mu$ is symmetric allows us to make the following estimates:
\begin{align*}
\sum_{t = 0}^\infty \sum_{\substack{i \in {\II}_{\leq x} \\ S_b \cdot b^t \leq |a_i| < (S_b + 1) \cdot b^t \\ |c_i| > 1/r}} \frac{1}{i} & \leq \sum_{j = r}^{r^2 - 1}\sum_{t = 0}^\infty \sum_{\substack{i \in {\II}_{\leq x} \\ \left(\frac{S_b \cdot b^t}{C_1 \cdot (j+1)}r^2\right)^{\frac{1}{m}} \leq i \leq \left(\frac{(S_b+1) \cdot b^t}{C_1 \cdot j}r^2\right)^{\frac{1}{m}} \\ j/r^2 < |c_i| \leq (j+1)/r^2}} \frac{1}{i} \\
& = \sum_{j = r}^{r^2 - 1} (1 + o(1)) \cdot 2\mu([\tfrac{j}{r^2}, \tfrac{j+1}{r^2}]) \sum_{t = 0}^\infty \sum_{\substack{i \in {\II}_{\leq x} \\ \left(\frac{S_b \cdot b^t}{C_1 \cdot(j+1)}r^2\right)^{\frac{1}{m}} \leq i \leq \left(\frac{(S_b+1) \cdot b^t}{C_1 \cdot j}r^2\right)^{\frac{1}{m}}}} \frac{1}{i}.
\end{align*}
The condition that $i \leq x$ implies that for a given $j$, all terms with $\left(\frac{(S_b+1) \cdot b^t}{C_1 \cdot j}r^2\right)^{\frac{1}{m}} > x$, or equivalently $t > \log_b \frac{C_1 \cdot jx^m}{(S_b + 1)r^2}$, do not contribute to the sum. If we take $g(x) = \log x$, we have
\begin{align*}
\sum_{t = 0}^\infty \sum_{\substack{i \in {\II}_{\leq x} \\ S_b \cdot b^t \leq |a_i| < (S_b + 1) \cdot b^t \\ |c_i| > 1/r}} \frac{1}{i} & \leq (1 + o(1)) \cdot C_2 \cdot 2\mu([\tfrac{j}{r^2}, \tfrac{j+1}{r^2}]) \cdot \log_b \tfrac{C_1 \cdot jx^m}{(S_b + 1)r^2} \cdot \frac{\log\left((1+S_b^{-1})(1+j^{-1})\right)}{m} \\
& \leq  (1 + o(1)) \cdot C_2 \cdot \log \frac{C_1 \cdot x^m}{S_b + 1} \cdot \frac{\left(\log_b(1+S_b^{-1}) + \log_b(1+r^{-1})\right)}{m} \\
& = (1 + o(1)) \cdot \left(\log_b(1+S_b^{-1}) + \log_b(1+r^{-1})\right) \cdot (C_2 \cdot g(x)),
\end{align*}
which is the desired bound. If on the other hand we take $g(x) = \log \log x$, it is easy to check that the proof given in~\cite{ye} works \emph{mutatis mutandis} in our case, the only significant difference being the additional factor of $C_2$.
\end{proof}
We now proceed with the proof of~\ref{eq:yomama}. Applying Lemma~\ref{lem:shakeitoff} to bound the second sum on the right-hand-side of~\eqref{dubbleD} from above and below and combining the result with the our bound~\eqref{raiders} on the first sum yields that
\begin{eqnarray*}
	& & \left(1 + o(1)\right) \cdot (\log_b(1+S_b^{-1})-\log_b(1+r^{-1})) \cdot (C_2 \cdot g(x)) \\
	& \leq & \sum_{\substack{i \in {\II}_{\leq x} \\ a_i \in  A(b,S_b)}} \frac{1}{i} \\
	& \leq & (1 + o(1)) \cdot (\log_b(1+S_b^{-1})+\log_b(1+r^{-1}) + 2\mu([0,r^{-1}])) \cdot (C_2 \cdot g(x)) + K \cdot g(r).
\end{eqnarray*}
Let $\varepsilon \in (0, \log_b 2)$, and take $r > (b^\varepsilon-1)^{-1}$ so that $2\mu[(0,r^{-1})] < \varepsilon$. Further taking $x > \max\{r^{1/\varepsilon},\exp((\log r)^{1/\varepsilon})\}$, we find that
\begin{eqnarray*}
	& & (1+ o(1))\cdot (\log_b(1+S_b^{-1})-\varepsilon) \cdot (C_2 \cdot g(x)) \\
	& \leq &  \sum_{\substack{i \in {\II}_{\leq x} \\ a_i \in  A(b,S_b)}} \frac{1}{i} \\
	& \leq & (1 + o(1))\cdot \left(\log_b(1+S_b^{-1})+2\varepsilon+ \frac{K \cdot g(r)}{C_2 \cdot g(x)} \right) \cdot (C_2 \cdot g(x)) \\
	& \leq & (1 + o(1))\cdot \left(\log_b(1+S_b^{-1})+2\varepsilon + \frac{K}{C_2}\varepsilon\right) \cdot (C_2 \cdot g(x)),
\end{eqnarray*}
Taking $\varepsilon \to 0$, we obtain the desired result.

\section{Frobenius Traces of Newforms}\label{sadist}

We now apply the results of Theorems~\ref{thm:arithmetic} and~\ref{thm:logarithmic} to study the Frobenius traces of newforms. As the case of newforms without complex multiplication is studied in~\cite{ye}, we shall consider the case of newforms with complex multiplication; however, we note that the theorems stated in this section hold in both cases.

Given a newform $f \in S_k^{\text{new}}(\Gamma_0(N))$ of even weight $k \geq 2$ and trivial nebentypus on $\Gamma_0(N)$ that has complex multiplication by an order in a (necessarily imaginary quadratic) number field $K$, let $a_f(p)$ denote the trace of Frobenius of $f$ at $p$ for primes $p$. Recall that $a_f(p) = 0$ for primes $p$ if and only if $p$ is inert or ramified in $\OO_K$. Thus, we restrict our attention to the traces of Frobenius $a_f(p)$ at primes $p$ that split in $\OO_K$ (discarding the finitely many ramified primes). For convenience, let $\PP_K$ denote the set of primes that split in $\OO_K$, and for every $x > 0$, let ${\PP_K}_{\leq x} = \{p \in \PP_K : p \leq x\}$. By the Chebotarev Density Theorem, $\PP_K$ has arithmetic density, and hence logarithmic density, equal to $\frac 12$ in the set of all primes. Therefore, we have that
\[
\sum_{p \le x} \frac 1p = (1+o(1)) \cdot \log \log x \Rightarrow \sum_{\substack{p\in {\PP_K}_{\leq x}}} \frac 1p =\left(1+o(1)\right) \cdot \frac{1}{2} \cdot \log \log x,
\]
where the asymptotic on the left-hand-side of the above implication follows from the proof of Dirichlet's Theorem for primes in arithmetic progressions.

For each prime $p \in \PP_K$, let $\cos \theta_p = a_f(p)/\big(2p^{\frac{k-1}{2}}\big) \in [-1, 1]$; this is well-defined by the Hasse bound. Recall that a newform has complex multiplication by an imaginary quadratic field $K$ if and only if it comes from a Gr\"ossencharakter of $K$ (see Proposition 4.4 and Theorem 4.5 of~\cite{ribet}). So, by Hecke's equidistribution result~\cite{hecke} for the angles given by Gr\"ossencharakters of imaginary quadratic fields over $\QQ$, we have the following:
\begin{theorem}
	\label{thm:satotate}
    Let $f \in S_k^{\operatorname{new}}(\Gamma_0(N))$ be a newform of even weight $k \geq 2$ and trivial nebentypus on $\Gamma_0(N)$ that has complex multiplication by an order in a (necessarily imaginary quadratic) number field $K$. Then, for any subinterval $[A, B] \subset [-1, 1]$, we have that
    $$\lim_{x \to \infty} \frac{\#\{p \leq x : p \in \PP_K\, \operatorname{and}\, \cos \theta_p \in [A, B]\}}{\#\{p \leq x : p \in \PP_K\}} = \mu([A, B]),$$
where $\mu$ is the complex-multiplication analogue of the Sato-Tate measure, defined by   
\begin{equation}\label{rollinginthedeep}
d\mu = \frac{1}{\pi} \frac{dt}{\sqrt{1-t^2}}.
\end{equation}
\end{theorem}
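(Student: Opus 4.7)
The plan is to deduce the equidistribution of $\cos\theta_p$ for $p \in \PP_K$ from Hecke's classical result on the equidistribution of angles attached to Gr\"ossencharakters of imaginary quadratic fields.

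First, I would make explicit the link between $f$ and a Gr\"ossencharakter. Since $f$ has CM by an order in the imaginary quadratic field $K$, the cited Proposition 4.4 and Theorem 4.5 of~\cite{ribet} produce a Gr\"ossencharakter $\psi$ of $K$ of infinity type $(k-1,0)$ such that $a_f(p) = \psi(\pp) + \psi(\bar\pp)$ for every rational prime $p \in \PP_K$ outside a finite bad set, where $(p) = \pp\bar\pp$ is the splitting of $p$ in $\OO_K$. Because $\psi(\bar\pp) = \overline{\psi(\pp)}$ and $\psi(\pp)\psi(\bar\pp) = \psi((p)) = p^{k-1}$, I may write $\psi(\pp) = p^{(k-1)/2} e^{i\phi_\pp}$ for a unique angle $\phi_\pp \in [0, 2\pi)$. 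This yields $a_f(p) = 2 p^{(k-1)/2}\cos\phi_\pp$, so that $\cos\theta_p = \cos\phi_\pp$.

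Second, I would invoke Hecke's equidistribution theorem~\cite{hecke}, which asserts that as $\pp$ runs through the degree-one prime ideals of $\OO_K$ ordered by norm, the angles $\phi_\pp$ become equidistributed in $[0,2\pi)$ with respect to the uniform measure $\frac{d\phi}{2\pi}$. Each split rational prime $p \in \PP_K$ corresponds to exactly two degree-one prime ideals $\pp,\bar\pp$, while the inert primes contribute only prime ideals of norm $p^2$ and the ramified primes are finite in number; both families are negligible by Landau's prime ideal theorem for $K$. Hence restricting Hecke's result to prime ideals above split rational primes preserves equidistribution, and since $\phi_{\bar\pp} = -\phi_\pp \pmod{2\pi}$ while $\cos$ is an even function, one obtains equidistribution of $\{\cos\phi_\pp : p \in \PP_K\}$ with respect to the pushforward of $\frac{d\phi}{2\pi}$ under $\phi \mapsto \cos\phi$.

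Finally, I would compute this pushforward explicitly. Under the substitution $t = \cos\phi$ for $\phi \in [0,\pi]$, one has $|dt| = \sin\phi \cdot |d\phi| = \sqrt{1-t^2}\,|d\phi|$, and each value $t \in (-1,1)$ is attained by the two points $\phi$ and $2\pi - \phi$ in $[0,2\pi)$. Combining these factors yields the measure $\frac{1}{\pi}\cdot\frac{dt}{\sqrt{1-t^2}}$ on $[-1,1]$, which is precisely $d\mu$. The main subtlety I anticipate is not the change of variables but the bookkeeping required to pass from Hecke's formulation (equidistribution of $\phi_\pp$ as $\pp$ ranges over prime ideals ordered by norm) to the desired equidistribution as $p$ ranges over the rational primes $p \in \PP_K$ ordered by size; once the contribution of prime ideals not lying over split rational primes is discarded via the prime ideal theorem, the rest of the argument is routine.
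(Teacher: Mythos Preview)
Your proposal is correct and follows exactly the route the paper indicates: the paper does not give a standalone proof of this theorem but simply states it as a consequence of Ribet's CM--Gr\"ossencharakter correspondence together with Hecke's equidistribution theorem, and you have spelled out precisely that deduction (including the pushforward computation) in more detail than the paper does.
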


Note that the measure $\mu$ defined in~\eqref{rollinginthedeep} has the desired convexity property. From the above discussion, it follows that the sequence $\{a_f(p) : p \text{ prime}\}$ fulfills the hypotheses of Theorems~\ref{thm:arithmetic} and~\ref{thm:logarithmic}. Thus, we obtain the following results as immediate corollaries:
\begin{theorem}
	Retain the setting of Theorem~\ref{thm:satotate}. The sequence $\{a_f(p)\}_{p \in \PP_K}$ is not arithmetically Benford in any base $b \ge 2$.
	\label{shizzle:arithmetic}
\end{theorem}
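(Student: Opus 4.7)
The plan is to deduce Theorem \ref{shizzle:arithmetic} as a direct application of the stronger (``in any base $b \geq 2$'') conclusion of Theorem \ref{thm:arithmetic}, with the following instantiation: take $\II = \PP_K$, $a_i = a_f(i)$ for $i \in \PP_K$, growth exponent $m = (k-1)/2$, and constant $C_1 = 2$. With these choices the normalized quantity becomes $c_i = a_f(i)/\bigl(2\, i^{(k-1)/2}\bigr) = \cos\theta_i$, which is exactly the quantity studied in Theorem \ref{thm:satotate}.

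I would then verify the four hypotheses of the strong form of Theorem \ref{thm:arithmetic} one by one. First, the Hasse bound $|a_f(p)| \leq 2\, p^{(k-1)/2}$ supplies the polynomial growth bound $|a_i| \leq C_1 \cdot i^m$. Second, Theorem \ref{thm:satotate} is precisely the statement that the $c_i$'s are equidistributed in $[-1,1]$ with respect to the measure $\mu$ defined in \eqref{rollinginthedeep}, which is manifestly symmetric and continuous. Third, the sparseness condition $\PP_K \cap [x, cx] \neq \varnothing$ for every $c > 1$ and all sufficiently large $x$ follows from the Chebotarev Density Theorem combined with the Prime Number Theorem: since $\PP_K$ has density $1/2$ among all primes, the count $\#\{p \in \PP_K : x \leq p \leq cx\} \sim \tfrac{(c-1)x}{2\log x}$ tends to infinity. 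Fourth, for the strict convexity of $\mu([0,t])$, a direct computation yields
\[
\mu([0,t]) = \int_0^t \frac{ds}{\pi\sqrt{1-s^2}} = \frac{\arcsin t}{\pi},
\]
whose second derivative $\frac{t}{\pi(1-t^2)^{3/2}}$ is strictly positive on $(0,1)$. All hypotheses of Theorem \ref{thm:arithmetic} being met, the conclusion follows immediately.

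Because this theorem is advertised as an immediate corollary, no serious obstacle arises; the only points requiring any thought are the sparseness check and the convexity check, both of which are short. The essential non-trivial input is Theorem \ref{thm:satotate}, which replaces the Sato--Tate input of \cite{ye} (used there in the non-CM case) with Hecke's equidistribution result for Gr\"ossencharakters of imaginary quadratic fields in the CM case. Once this substitution is made, the proof template for both CM and non-CM newforms is identical.
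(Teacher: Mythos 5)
Your proposal is correct and follows essentially the same route as the paper: both deduce the result by verifying the hypotheses of Theorem~\ref{thm:arithmetic} (Hasse/Deligne bound for the growth condition, Theorem~\ref{thm:satotate} for equidistribution, density of $\PP_K$ for the sparseness condition, and strict convexity of $\mu([0,t]) = \tfrac{\arcsin t}{\pi}$). Your write-up is in fact slightly more explicit than the paper's, which leaves the sparseness and convexity checks to the reader.
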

\begin{theorem}
	Retain the setting of Theorem~\ref{thm:satotate}.
	The sequence $\{a_f(p)\}_{p \in \PP_K}$ is logarithmically Benford in every base $b \ge 2$.
	\label{shizzle:logarithmic}
\end{theorem}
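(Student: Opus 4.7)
The plan is to apply Theorem \ref{thm:logarithmic} directly to the sequence $\aa = \{a_f(p)\}_{p \in \PP_K}$, indexed by the infinite subset $\II = \PP_K \subset \NN$. Since the preceding sections have essentially assembled every ingredient that Theorem \ref{thm:logarithmic} requires, the proof reduces to verifying each hypothesis; I do not anticipate any serious obstacle beyond matching notation correctly.

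First I would take $m = (k-1)/2$ and $C_1 = 2$. The Hasse bound $|a_f(p)| \le 2 p^{(k-1)/2}$ then gives the polynomial growth hypothesis $|a_i| \le C_1 \cdot i^m$ with $i = p$, and since a split prime $p \in \PP_K$ is neither inert nor ramified in $\OO_K$, we have $a_f(p) \neq 0$, so $\aa$ genuinely consists of nonzero real numbers. With this normalization, the auxiliary quantity $c_p \defeq a_f(p)/(C_1 \cdot p^m)$ equals exactly $\cos \theta_p$.

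Next I would invoke Theorem \ref{thm:satotate} to record that $\{\cos \theta_p\}_{p \in \PP_K}$ is equidistributed with respect to the measure $d\mu = \frac{1}{\pi}\frac{dt}{\sqrt{1-t^2}}$, which is manifestly a continuous and symmetric probability measure on $[-1,1]$. Finally, the density hypothesis on $\II$ has already been carried out in the paragraph preceding Theorem \ref{thm:satotate}: the Chebotarev Density Theorem applied to $K/\QQ$, combined with Mertens' second theorem, gives
\[
\sum_{p \in {\PP_K}_{\leq x}} \frac{1}{p} = (1+o(1)) \cdot \tfrac{1}{2} \cdot \log\log x,
\]
so Theorem \ref{thm:logarithmic} is applicable with $C_2 = 1/2$ and $g(x) = \log\log x$. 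Invoking it yields the claim. The only mild subtlety worth flagging is that $m = (k-1)/2$ is generally not an integer, but Theorem \ref{thm:logarithmic} asks only that $m$ be a positive real constant, which holds since $k \ge 2$.
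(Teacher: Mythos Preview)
Your proposal is correct and matches the paper's own argument essentially line for line: the paper also deduces this theorem as an immediate corollary of Theorem~\ref{thm:logarithmic} after checking, via the Hasse bound, Theorem~\ref{thm:satotate}, and the Chebotarev/Mertens estimate $\sum_{p\in{\PP_K}_{\le x}}1/p=(1+o(1))\cdot\tfrac12\log\log x$, that all hypotheses hold with $C_1=2$, $m=(k-1)/2$, $C_2=1/2$, and $g(x)=\log\log x$.
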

In summary, we have shown that the Benford's Law-type results on the Frobenius traces of newforms without complex multiplication proven in~\cite{ye} also hold for newforms with complex multiplication.

\section*{Acknowledgments}
\noindent This research was supervised by Ken Ono at the Emory University Mathematics REU and was supported by the National Science Foundation (grant number DMS-1250467). We would like to thank Ken Ono and Jesse Thorner for offering their advice and guidance and for providing many helpful discussions and valuable suggestions on the paper. We would also like to thank the anonymous referee for providing numerous helpful comments on an earlier draft of the paper.

\bibliographystyle{amsxport}
\bibliography{biblio3}

\end{document}